\documentclass{article}

    \usepackage[utf8]{inputenc}
    \usepackage{verbatim}
    \usepackage{amsfonts}
    \usepackage{amsmath}
    \usepackage{amsthm}
    \usepackage{stmaryrd} 
    
    \usepackage[style=alphabetic,maxbibnames=99]{biblatex}
        \DeclareFieldFormat{title}{\mkbibquote{#1}}
    
    \usepackage{hyperref}
    \usepackage[noabbrev,nameinlink,capitalise]{cleveref}
        \crefname{subsection}{Subsection}{Subsections}
        \crefname{subsection}{Subsection}{Subsections}
    
    \usepackage{subcaption}
    \usepackage{color}
    \usepackage[dvipsnames]{xcolor}
    \usepackage{graphicx}

    \theoremstyle{plain}
        \newtheorem{theorem}{Theorem}[section]
        \newtheorem{corollary}[theorem]{Corollary}
        \newtheorem{lemma}[theorem]{Lemma}
        \newtheorem{proposition}[theorem]{Proposition}
    \theoremstyle{definition}
        
        \newtheorem{example}[theorem]{Example}
        
        \newtheorem{algorithm}[theorem]{Algorithm}
    \theoremstyle{remark}
        \newtheorem{remark}[theorem]{Remark}

    \title{Conjugacy and period in symmetric diagram groups}
    \author{Matteo Tarocchi}
    \date{}

    \newcommand{\init}[0]{\mathsf{i}}
    \newcommand{\term}[0]{\mathsf{t}}
    \DeclareMathOperator{\SD}{SD}
    \newcommand{\Presentation}[0]{\mathcal{C}}
    \newcommand{\Relations}[0]{\mathcal{R}}
    \newcommand{\Top}[0]{\mathrm{Top}}
    \newcommand{\Bottom}[0]{\mathrm{Bot}}
    \newcommand{\Transistors}[0]{\mathrm{Trn}}
    \newcommand{\labeling}[0]{\mathsf{L}}
    \newcommand{\coloring}[0]{\mathsf{C}}
    \DeclareMathOperator{\domain}{Dom}
    \DeclareMathOperator{\range}{Ran}
    
    \newcommand{\semigp}{\mathcal{L}}
    \newcommand{\LoopMap}{\Lambda}
    \newcommand{\x}{\textcolor{Blue}{x}}
    \newcommand{\y}{\textcolor{Red}{y}}

\begin{document}

\maketitle

\begin{abstract}
    We show how to solve the conjugacy (search) problem and the problem of determining whether a given element is periodic in symmetric diagram groups based on finite semigroup presentations.
\end{abstract}


\section{Introduction}

Symmetric diagram groups are to Thompson's group $V$ what (standard) diagram groups are to Thompson's group $F$.
Briefly introduced by Guba and Sapir in \cite{DiagramGroups} under the name of \textit{braided diagram groups}, a formal study of symmetric diagram groups was later started by Farley in \cite{PictureGroupsCAT}, there called \textit{picture groups}, and was continued in articles such as \cite{FH18,GenevoisEmbeddings,BGN23}.
\footnote{
The author decided to forgo the names \textit{picture groups} and \textit{braided diagram groups} and opted for \textit{symmetric diagram group} as in \cite{BGN23,GenevoisDiagramGroups}, as this choice expressly suggests that they generalize (standard) diagram groups and because no actual braids are involved.
}

The purpose of this short paper is to present a solution to some algorithmic problems in symmetric diagram groups, namely the \textbf{conjugacy problem} (deciding whether two elements are conjugate), the \textbf{conjugacy search problem} (finding an explicit conjugator) and the \textbf{period problem} (deciding whether an element is periodic).
We develop and use a variation of strand diagrams, initially used in \cite{BM14} for the conjugacy problem in Thompson's groups $F$, $T$ and $V$.
These have already been a useful tool to understand conjugacy in many groups \cite{BM14,Aroca2018TheCP,GofferLederle,BM23,RearrConj,ConjTFGoES}.
Note that previous applications of strand diagrams only concerned tree-like diagrams (where vertices were either \textit{splits} or \textit{merges});
no such distinction exists for symmetric diagram groups.

\begin{theorem}
    Given any finite semigroup presentation $\Presentation=\langle \Sigma \mid \Relations \rangle$ and any word $w \in \Sigma^*$, the conjugacy problem, the conjugacy search problem and the period problem are solvable in the symmetric diagram group $\SD_w(\Presentation)$.
\end{theorem}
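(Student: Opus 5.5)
Following the strategy of \cite{BM14}, the plan is to replace conjugacy classes in $\SD_w(\Presentation)$ by a combinatorial object, closed strand diagrams, on which all three problems become finite graph computations.

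\textbf{Step 1: closing diagrams.} An element of $\SD_w(\Presentation)$ is represented by a reduced symmetric diagram. This is a finite directed acyclic graph whose edges are labeled by letters of $\Sigma$, whose inner vertices (\emph{transistors}) realise relations of $\Relations$, and whose top and bottom boundaries both read $w$. Gluing the bottom boundary to the top boundary, letter by letter, gives a \emph{closed} diagram embedded in an annulus. Because there is no planarity constraint in the symmetric setting, this is an abstract labeled directed graph with a cyclic "cut" structure.

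Conjugating by $h$ corresponds to moving the cut. Concretely, $hgh^{-1}$ closes up to the same graph once the copies of $h$ and $h^{-1}$ are cancelled across the gluing. Conversely, any cut of the closed diagram along a "cross-section" labeled by a word $u$ gives an element of $\SD_u(\Presentation)$. I would therefore define a closed diagram as a graph with a class of admissible cuts, together with reduction moves:
\begin{itemize}
\item cancelling a dipole, i.e.\ a transistor for $(u,v)$ followed by one for $(v,u)$ wired identically;
\item removing components that are free loops, keeping track of them as invariants.
\end{itemize}

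\textbf{Step 2: confluence and completeness.} The main theorem to establish is that $g$ and $g'$ in $\SD_w(\Presentation)$ are conjugate if and only if their reduced closed diagrams are isomorphic as labeled graphs, and the cuts reading $w$ must correspond. The "only if" direction needs the reduction system to be terminating and confluent. Termination is immediate because each reduction decreases the number of transistors. Local confluence should follow from a case analysis of overlapping dipoles, in the style of Newman's lemma, as in \cite{BM14}. The "if" direction requires showing that two cuts of the same reduced closed diagram, both reading $w$, give conjugate elements. I would do this by sliding a cut across one transistor at a time, which conjugates by a one-transistor diagram. This may pass through intermediate words $u\neq w$, which is harmless because the groupoid of all symmetric diagrams is connected in the relevant sense.

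I expect the main obstacle to be this step: free loops, and the fact that the diagrams are not tree-like, so there are no splits and merges to orient reductions. I would first handle free loops by proving that they are genuine invariants, namely cycles labeled by letters that are never touched by a transistor, and then run the confluence argument on the rest.

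\textbf{Step 3: algorithms.} Given $g$ and $g'$, close and reduce both diagrams. This is effective because the number of transistors strictly decreases. Then test all isomorphisms between the two finite labeled graphs that match some $w$-reading cuts; enumerating cuts of the finite graph is finite. This decides conjugacy. Recording the sequence of cut-slides and reductions yields an explicit conjugator, as a product of the one-transistor diagrams, which solves the search problem.

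For the period problem, note that $g$ is periodic exactly when some power $g^n$ reduces to a diagram with no transistors, i.e.\ a permutation of the letters of $w$. Equivalently, the reduced closed diagram of $g$ consists only of loops with no transistors. The remaining check is whether the reduced closed diagram of $g^n$ is the $n$-fold cyclic cover of that of $g$, which I would verify using confluence. Granting this, $g$ is periodic if and only if its reduced closed diagram has no transistors, and this is decidable.
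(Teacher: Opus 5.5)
Your Step 2 rests on a false claim. The reduction system on closed diagrams is \emph{not} confluent up to isomorphism of labeled graphs, and free loops are \emph{not} invariants.

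Your case analysis misses one overlap. Two transistors $u\neq v$ with $\labeling(u)=\labeling(v)^{-1}$ can be dipoles in both directions: the outputs of each are exactly the inputs of the other, in order, so $u$ and $v$ form a cycle wrapping around the cut. Cancelling $D(u,v)$ leaves loops colored by one side of the relation. Cancelling $D(v,u)$ leaves loops colored by the other side, with the same winding number.

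Concretely, take $\Presentation=\langle x,y\mid x=y\rangle$ and $w=xxy$. Let $g\in\SD_w(\Presentation)$ have strands $1\mapsto 2\mapsto 3\mapsto 1$, with a transistor $(x=y)$ on the strand $2\mapsto 3$ and a transistor $(y=x)$ on the strand $3\mapsto 1$. Then $\overline{g}$ is a $2$-cycle $u\to v\to u$ whose $y$-colored wire is cut once and whose $x$-colored wire is cut twice. Sliding the cut entirely onto the $x$-wire and cancelling gives a single $x$-colored loop of winding number $3$. Sliding it onto the $y$-wire and cancelling the other dipole gives a single $y$-colored loop of winding number $3$. So one element has two non-isomorphic reduced closed diagrams, and your ``only if'' direction fails. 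Here $\SD_w(\Presentation)\cong S_3$, so your isomorphism test may declare the conjugate elements $g$ and $g^{-1}$ non-conjugate. For relations such as $x=yy$ or $xy=z$, not even the number of loops is invariant.

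The missing ingredient is the paper's \emph{loop rewriting}. Loops of a common winding number $p$ that read $p$ copies of one side of a relation (after a base permutation) may be replaced by loops reading $p$ copies of the other side. This is conjugation by a stack of $p$ one-transistor diagrams, and it is exactly what closes the critical pair above. Hence reduced closed diagrams are unique only up to similarity (base line shifts, base permutations and loop rewritings), which the paper proves by applying Newman's lemma to similarity classes.

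The transistor part can still be compared via cohomology of the cut. The loop part, however, is an invariant only as the element $\LoopMap(\cdot)$ of the commutative semigroup $\semigp_\infty$. This semigroup has generators $x^{(k)}$, and every relation of $\Presentation$ is imposed separately in each winding degree $k$. Deciding equality of loop parts therefore requires the solvability of the word problem for finitely presented commutative semigroups, which no enumeration of graph isomorphisms supplies.

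Your period argument survives once ``confluence'' is weakened to ``confluence up to similarity'', since similarities never change the transistor part. Separately, your instruction to check that ``the cuts correspond'' needs an effective criterion. One such criterion is that two cuts are related by slides and permutations precisely when their coboundaries are cohomologous.
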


The strategy for conjugacy directly generalizes that of \cite{ConjTFGoES}, where one computes (non-unique) reduced closed diagrams and separately compares the parts with vertices using graph cohomology and the parts without vertices using the word problem in abelian semigroups.
Two key additions to \cite{ConjTFGoES} lie in properly reinterpreting symmetric diagrams as graphs and in dealing with loop rewritings not as reductions but as generally non-terminating transformations.

\begin{remark}
    There exist semigroup presentations $\Presentation$ and words $w\in\Sigma^*$ such that the symmetric diagram group $\SD_w(\Presentation)$ has solvable conjugacy problem whereas the diagram group $\mathrm{D}_w(\Presentation)$ does not by \cite[Example 15.22]{DiagramGroups}.
    What makes conjugacy easier to determine in symmetric diagram groups is that distinguishing loop parts is equivalent not to the word problem in the semigroup with presentation $\Presentation$, but in the abelian semigroup with presentation $\Presentation$.
\end{remark}

The algorithm for the period problem also relies on strand diagrams and generalizes that of \cite[Section 5]{BBIP26} for Thompson's groups.
Note that this problem is unsolvable in certain relatives of Thompson groups such as $2V$ \cite{WP2V}.

Diagram groups act (not always faithfully) on $[0,1]$ via transition functions as described in \cite[Section 4]{DgmGpsOrder}.
This easily generalizes to actions of symmetric diagram groups on the Cantor space.
Even if we do not do this here, closed strand diagrams could be used to analyze the dynamics of such actions.


\section{Symmetric diagram groups}

Let us define symmetric diagram groups using graphs.

\subsection{Basic definitions, notations and conventions}
\label{sub:notation:convention}

Let us briefly set the notation and recall the notions that we use.

\subsubsection{Graphs}
\label{ssub:graphs}

A \textbf{graph} is a tuple $(V,E,\init,\term)$, where $V$ is a non-empty set (of \textit{vertices}), $E$ is a set (of \textit{edges}), $\init$ and $\term$ are maps $E \to V$ ($\init(e)$ and $\term(e)$ are the \textit{initial} and \textit{terminal} vertices of an edge $e$, respectively).
For closed symmetric diagrams we use a generalized version of graphs where some edges, which we call \textbf{loops}, can be devoid of initial and terminal vertices (examples at the bottom of \cref{fig:confluence}).

A \textbf{cycle} is a subgraph with set of edges $\{e_1,\dots,e_k\}$ and set of vertices $\bigcup_{1\leq i\leq k}\left(\term(e_i)\cup\init(e_i)\right)$ such that $\term(e_i)=\init(e_j)\iff j\equiv i+1$ mod $k$.

\subsubsection{Semigroups}

We denote by $\Sigma^*$ the free semigroup over the set $\Sigma$.
This is the set of finite non-empty words over the alphabet $\Sigma$, composed by juxtaposition.

For a semigroup presentation $\Presentation=\langle\Sigma\mid\Relations\rangle$, we denote the relations in $\Relations$ by $(x=y)$ with $x,y\in\Sigma^*$.
We write $(x,y)^{-1}=(y,x)$ and $\Relations^{\pm1}=\Relations\cup\Relations^{-1}$.

We will always assume that $(x=y)\in\Relations \implies (x=y)^{-1}\notin\Relations$.

\subsubsection{Rewriting systems}

A \textbf{rewriting system} is just a graph.
We use this term when the edges are understood as \textbf{moves} that transform the initial vertex into the terminal one.

A rewriting system is:
\textbf{terminating} if it features no infinite walk (i.e., edges $e_1,e_2,\dots$ such that $\term(e_i)=\init(e_i)$ for all $i\geq1$);
\textbf{locally confluent} if, for all moves $A\rightarrow B_1$ and $A\rightarrow B_2$, there exist $C$ and walks from $B_1$ to $C$ and from $B_2$ to $C$;
\textbf{confluent} if there exists a unique $R$ (called the \textbf{reduced} element) such that, for all moves $A\rightarrow B_1$ and $A\rightarrow B_2$, there exist walks from $B_1$ to $R$ and from $B_2$ to $R$.
These notions are linked by the following classical result from \cite{NewmanDiamond}.

\begin{lemma}\label{lem:Newman:diamond}
    A locally confluent and terminating rewriting system is confluent.
\end{lemma}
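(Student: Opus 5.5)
We prove Newman's lemma by well-founded (Noetherian) induction, which termination makes available. Write $A \rightarrow^* B$ when there is a walk from $A$ to $B$. Since the system is terminating, the relation ``$B$ is reachable from $A$ by a nonempty walk'' has no infinite descending chains. So we may prove a property $P(A)$ for every element $A$ by assuming $P(B)$ for all $B$ reachable from $A$ by at least one move. The property we prove is
\[
P(A):\quad \text{for all } B_1,B_2 \text{ with } A\rightarrow^* B_1 \text{ and } A\rightarrow^* B_2, \text{ there is } C \text{ with } B_1\rightarrow^* C \text{ and } B_2\rightarrow^* C.
\]
Separately, termination also shows that every $A$ reaches some irreducible element, meaning one with no outgoing move: keep applying moves, and the walk must stop.

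For the inductive step, fix $A$ and walks $A\rightarrow^* B_1$ and $A\rightarrow^* B_2$. If either walk is empty, take $C$ to be the other endpoint. Otherwise write $A\rightarrow A_1\rightarrow^* B_1$ and $A\rightarrow A_2\rightarrow^* B_2$. Local confluence gives $D$ with $A_1\rightarrow^* D$ and $A_2\rightarrow^* D$.
\begin{itemize}
\item Apply $P(A_1)$, which holds since $A_1$ is strictly below $A$, to the pair $B_1, D$. This gives $E$ with $B_1\rightarrow^* E$ and $D\rightarrow^* E$.
\item Then $A_2\rightarrow^* E$, via $D$, and $A_2\rightarrow^* B_2$. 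Apply $P(A_2)$ to get $C$ with $E\rightarrow^* C$ and $B_2\rightarrow^* C$.
\end{itemize}
Hence $B_1\rightarrow^* E\rightarrow^* C$ and $B_2\rightarrow^* C$, which proves $P(A)$.

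To reach the form of confluence stated in the paper, let $R$ be an irreducible element reachable from a given $A$. Any two irreducible elements $R, R'$ reachable from $A$ have a common descendant by $P(A)$. Irreducible elements have no outgoing moves, so that descendant is both $R$ and $R'$, and thus $R=R'$. Consequently, for any $B$ with $A\rightarrow^* B$, the unique irreducible descendant of $B$ is also an irreducible descendant of $A$, and so equals $R$. In particular every $B_i$ walks to $R$. (The paper's definition reads ``unique $R$'' per starting element, or per connected component; the argument gives exactly this.)

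The main obstacle is justifying the Noetherian induction principle. Termination as defined (no infinite walk) says that the reachability order is well-founded. Suppose $P$ failed somewhere. Since $P(A)$ can only fail if $P(B)$ fails for some $B$ strictly below $A$, each failure yields another failure one or more moves further along. Dependent choice then produces an infinite walk, contradicting termination. I would state this contrapositive explicitly rather than cite well-founded induction.
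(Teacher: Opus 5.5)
Your proof is correct. The paper gives no argument for this lemma; it only cites Newman's original paper, so there is no internal proof to compare against. What you wrote is the standard short proof by Noetherian induction (essentially Huet's, 1980), which is much more direct than Newman's original argument. It is also self-contained: your contrapositive reduction of the induction principle to the paper's ``no infinite walk'' definition of termination, via dependent choice, is exactly what that definition supports. Your reading of the paper's literally ill-posed definition of confluence is the right one, namely that each element has exactly one irreducible descendant and every descendant walks to it.

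One point is worth making explicit. The paper actually uses the lemma in a connected-component form, for instance in ``each diagram is equivalent to a unique reduced diagram'' and in the conclusion of \cref{prop:dipole:reductions} that equivalent strongly reduced diagrams are similar. There, equivalence means lying in the same component of the underlying undirected graph. This follows in one line from your last paragraph. You showed that $A\rightarrow^* B$ forces $A$ and $B$ to have the same irreducible descendant, so the normal form is preserved across a single move in either direction. It is therefore constant on connected components, and since an irreducible element is its own normal form, each component contains exactly one irreducible element. Your parenthetical remark that the argument ``gives exactly this'' is true, but only after this step, so it should be stated.
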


\subsection{Symmetric diagrams}
\label{sub:symmetric:diagrams}

Let us fix a finite semigroup presentation $\Presentation=\langle\Sigma\mid\Relations\rangle$.
A \textbf{symmetric diagram} over $\Presentation$ is the collection $\Delta$ of the following data:
\begin{itemize}
    \item a graph $(V,E,\init,\term)$ without cycles;
    \item disjoint sets $\Top$, $\Bottom$ and $\Transistors$ such that $V=\Top\sqcup\Transistors\sqcup\Bottom$;
    \item a map $\labeling\colon\Transistors\to\Relations^{\pm1}$ (the \textit{labeling map});
    \item a map $\coloring\colon E\to\Sigma$ (the \textit{coloring map});
    \item for all $v\in V$, total orders $<_v^\term$ and $<_v^\init$ on $\term^{-1}(v)$ and $\init^{-1}(v)$, respectively;
    \item total orders $<^\Top$ and $<^\Bottom$ of $\Top$ and $\Bottom$, respectively;
\end{itemize}
satisfying the following conditions:
\begin{enumerate}
    \item all $v\in\Top$ are such that $|\term^{-1}(v)|=0$ and $|\init^{-1}(e)|=1$, while all vertices $v\in\Bottom$ are such that $|\term^{-1}(v)|=1$ and $|\init^{-1}(v)|=0$;
    \item for all $v\in\Transistors$, if $\term^{-1}(v)=\{e_1<_v^\term\dots<_v^\term e_n\}$ and $\init^{-1}(v)=\{f_1<_v^\init\dots <_v^\init f_m\}$ and if $\labeling(v)=(x_1x_2\dots x_{n'} = y_1y_2\dots y_{m'})$, then $n=n'$, $m=m'$, $\forall i\leq n,\ \coloring(e_i)=x_i$ and $\forall j\leq m,\ \coloring(f_j)=y_j$.
\end{enumerate}

The edges of a diagram are called \textbf{wires} and the elements of $\Top$, $\Bottom$ and $\Transistors$ are referred to as \textbf{sources}, \textbf{sinks} and \textbf{transistors}.

In pictures (such as \cref{fig:symmetric:diagram}), the total order $<_v^\term$ (resp. $<_v^\init$) is represented by the left-to-right order in which the elements of $\term^{-1}(v)$ (resp. $\init^{-1}(v)$) are drawn close to $v$.
Similarly, the total orders  $<^\Top$ and $<^\Bottom$ are represented by the left-to-right ordering of the vertices in $\Top$ and $\Bottom$, respectively.
Moreover, $\labeling(v)$ will be depicted as a symbol inside a circular node representing the transistor and $\coloring(e)$ will be represented by a color of the wire $e$.

\begin{example}\label{ex:symmetric:diagram:group}
    Our examples will be based on the semigroup presentation:
    \[\Presentation=\left\langle \x,\y\mid A=(\x,\x\y),B=(\x\y,\x\y\y\x),C=(\x\y,\x\x\y)\right\rangle.\]
    \cref{fig:symmetric:diagram} depicts two diagrams over $\Presentation$.
\end{example}

\begin{figure}
    \centering
    \includegraphics[scale=.75]{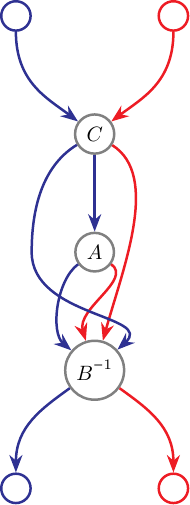}
    \hspace{4cm}%
    \includegraphics[scale=.75]{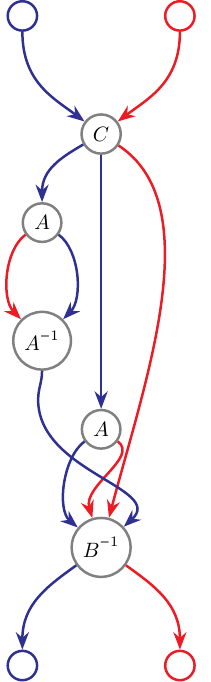}%
    \caption{Two symmetric diagrams over the presentation in \cref{ex:symmetric:diagram:group}.}
    \label{fig:symmetric:diagram}
\end{figure}

Among symmetric diagram groups one finds many subgroups of Thompson's group $V$, such as the Higman-Thompson groups $V_{n,r}$ and the Houghton groups $H_n$ (see \cite[Subsection 2.4]{GenevoisEmbeddings} and \cite{FH18}).
More generally, topological full groups of edge shifts are symmetric diagram groups (semigroups are generated by the edges of the graph and relations are $(e,w_e)$, with $w_e$ a word with a digit $f$ whenever $\term(e)=\init(f)$).
All these groups have solvable conjugacy problem by previous works, so we chose a recurring example that is less ``Thompsonesque''.


\subsection{Dipoles and equivalent diagrams}

A \textbf{dipole} is a subgraph $D(u,v)$ of a symmetric diagram over $\Presentation$ consisting of two transistors $u$ and $v$ and edges $e_1,\dots,e_k$ such that:
\begin{enumerate}
    \item $\labeling(u)=\labeling(v)^{-1}$;
    \item $\init^{-1}(u)=\{e_1<_u^\init e_2<_u^\init\dots<_u^\init e_k\}$;
    \item $\term^{-1}(v)=\{e_1<_v^\term e_2<_v^\term\dots<_v^\term e_k\}$.
\end{enumerate}

A \textbf{dipole reduction} consists of removing a dipole $D(u,v)$, merging the edges in $\term^{-1}(u)$ with those in $\init^{-1}(v)$ according to their orders $<_u^\term$ and $<_v^\init$.
Note that such edges and their colors are the same because $\labeling(u)=\labeling(v)^{-1}$.
A diagram is \textbf{reduced} if it features no dipoles.
For example, in \cref{fig:symmetric:diagram} a dipole reduction turns the right diagram into the left one, which is reduced.

We define a rewriting system on the family of symmetric diagrams over $\Presentation$ whose moves are the dipole reductions.
Two diagrams are \textbf{equivalent} (as symmetric diagrams) when they lie in the same connected component of the rewriting system.
A straightforward application of \cref{lem:Newman:diamond} shows that each diagram is equivalent to a unique reduced diagram, see \cite[Theorem 3.17]{DiagramGroups}.

\subsection{Symmetric diagram groupoids and groups}

Let $\Delta$ be a diagram with $\Top=\{t_1<^\Top\dots<^\Top t_n\}$, $\Bottom=\{b_1<^\Bottom\dots<^\Bottom b_m\}$, $x_i=\coloring(t_i)$ and $y_j=\coloring(b_j)$.
The \textbf{domain} and \textbf{range} of $\Delta$ are $\domain(\Delta)=x_1x_2\dots x_n\in\Sigma^*$ and $\range(\Delta)=y_1y_2\dots y_m\in\Sigma^*$, respectively.
For example, the diagrams in \cref{fig:symmetric:diagram} have domain and range equal to $\x\y$.

If $\range(\Delta_1)=\domain(\Delta_2)$, their composition $\Delta_1\Delta_2$ is computed by identifying $\Bottom_1$ with $\Top_2$ and merging the edges in $\term^{-1}(\Bottom_1)$ with those in $\init^{-1}(\Top_2)$ according to their orders $<^{\Bottom_1}$ and $<^{\Top_2}$.
This operation is well-defined on equivalence classes, i.e., if $\Delta_1$ and $\Delta_2$ are equivalent to $\Delta_1'$ and $\Delta_2'$, respectively, then $\Delta_1\Delta_2$ is equivalent to $\Delta_1'\Delta_2'$ and has the same domain and range.

This partial composition is associative.
The identity of a domain $x_1\dots x_n\in\Sigma^*$ is the equivalence class of the diagram with vertices $\Top=\{t_1<^\Top\dots<^\Top t_n\}$, $\Bottom=\{b_1<^\Bottom\dots<^\Bottom b_m\}$ and wires $e_1,\dots,e_n$ with $\init(e_i)=t_i$, $\term(e_i)=b_i$ and $\coloring(e_i)=x_i$.
The inverse is obtained by reversing the orientation of the edges and switching $\Top$ and $\Bottom$.
The set of all equivalence classes of symmetric diagrams over $\Presentation$ is thus a groupoid, which we denote by $\SD(\Presentation)$.
Given $w\in\Sigma^*$, a \textbf{symmetric diagram group} over $\Presentation$ with baseword $w$ is the group of $\Delta\in\SD(\Presentation)$ with $\domain(\Delta)=\range(\Delta)=w$.
We denote it by $\SD_w(\Presentation)$.

\begin{remark}\label{rmk:groupoid:generators}

    The groupoid $\SD(\Presentation)$ is generated by the infinite set of all \textbf{permutation diagrams} (diagrams with no transistors) and \textbf{simple transistor diagrams} (diagrams with a unique transistor and no crossings of wires in the planar drawing of the graph).
\end{remark}


\section{Closed symmetric diagrams}
\label{sec:closed:symmetric:diagrams}


If $\Delta$ is a symmetric diagram with $\domain(\Delta)=\range(\Delta)$, its \textbf{closed symmetric diagram} is the graph $\overline{\Delta}$ obtained by removing $\Top$ and $\Bottom$, merging $\init^{-1}(\Top)$ and $\term^{-1}(\Bottom)$ according to $<^{\Top}$ and $<^{\Bottom}$, and equipped with a totally ordered \textbf{base line}, which is the multiset of wires merged given the order inherited from $<^{\Top}$ and $<^{\Bottom}$.
(It is a multiset because wires may appear more than once.)
For example, \cref{fig:closed:symmetric:diagrams,fig:confluence} depict many close symmetric diagrams.

\begin{figure}
    \centering
    \includegraphics[width=\textwidth]{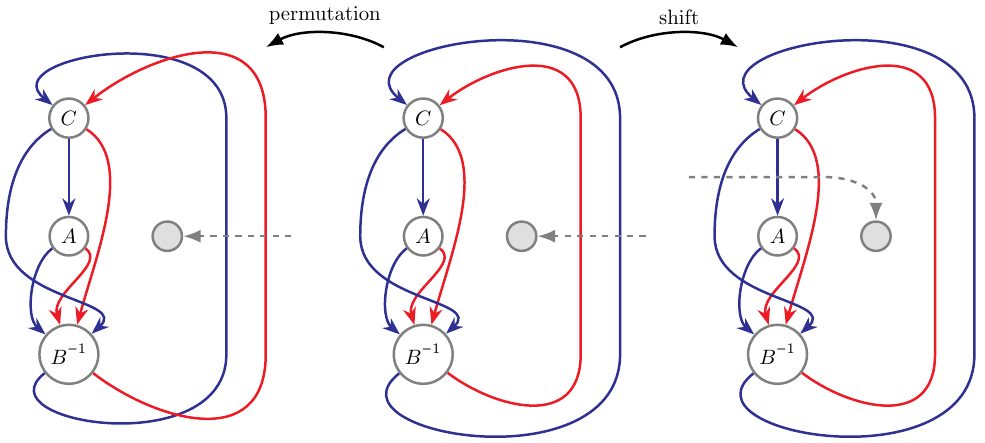}%
    \caption{The closed symmetric diagram associated to the leftmost diagram of \cref{fig:symmetric:diagram} along with a base permutation and a base line shift.}
    \label{fig:closed:symmetric:diagrams}
\end{figure}

The resulting graph has all the transistors of $\Delta$ but none of its sources and sinks.
The data given by the labeling $\labeling$, the coloring $\coloring$ and for all $v\in\Transistors$ the total orders $<_v^\init$ and $<_v^\term$ on $\term^{-1}(v)$ and $\init^{-1}(v)$, respectively, are all inherited from $\Delta$.
Note that $\overline{\Delta}$ also inherits condition 2 from \cref{sub:symmetric:diagrams}.

Observe that the result may not be a graph in the classical sense because of loops;
see \cref{ssub:graphs}.
Also note that these new graphs do not really encode any new information (i.e., $\overline{\Delta_1}=\overline{\Delta_2}\iff\Delta_1=\Delta_2$), but they allow us to more easily define certain transformations in the next subsections.

\subsection{Transformations of closed symmetric diagrams}
\label{sub:moves:closed:symmetric:diagrams}

Dipoles and dipole reductions of $\overline{\Delta}$ are precisely the same as those of $\Delta$.
Each dipole reduction of $\overline{\Delta}$ results in some $\overline{\Delta'}$ with $\Delta'$ equivalent to $\Delta$.

A \textbf{base permutation} is the transformation of applying a permutation $\sigma$ to the wires in the base line of $\overline{\Delta}$.
The result is $\overline{\Delta'}$ with $\Delta'=P^{-1}\Delta P$, where $P$ is the permutation diagram corresponding to $\sigma$.
See \cref{fig:closed:symmetric:diagrams}.

Let $\{e_1<\dots<e_r\}$ be the base line of $\overline{\Delta}$.
Assume that $\exists k,p,n\in\mathbb{N}$ such that $(e_{k+1},\dots,e_{k+n}),(e_{k+n+1},\dots,e_{k+2n}),\dots,(e_{k+(p-1)n+1},\dots,e_{k+pn})$ are the same tuples of loops (i.e., $e_{k+ln+i}=e_{k+ln+j}$ for all $l\in\{0,\dots,p-1\}$ and $i,j\in\{1,\dots,n\}$ congruent modulo $p$) and that these are all the instances of these edges in the base line.
Assume also that $\exists(x_1\dots x_n,y_1\dots y_m)\in\Relations^{\pm1}$ such that $\coloring(e_{k+ln+1})\coloring(e_{k+ln+2})\dots\coloring(e_{k+(l+1)n})=x_1\dots x_n$ for any (and then all) $l\in\{1,\dots,p\}$.
A \textbf{loop rewriting} is the transformation of replacing the loops $e_{k+1},\dots,e_{k+pn}$ with loops $f_1,\dots,f_{pm}$, in this order in the base line, where $f_{lm+i}$ is colored by $y_i$ for all $l\in\{1,\dots,p\}$ and $i\in\{1\dots,n\}$.
The result is $\overline{\Delta'}$ with $\Delta'=T^{-1}\Delta T$, where $T$ is a composition of $p$ transistor diagrams where the transistors are all labeled by $(x_1\dots x_n,y_1\dots y_m)$.
For example, \cref{fig:confluence} shows a loop rewriting at the bottom, where $p=2$.

Let $\{e_1<\dots<e_r\}$ be the base line of $\overline{\Delta}$.
Assume that $v$ is a transistor such that $\term^{-1}(v)$ (or $\init^{-1}(v)$) is $\{e_k,\dots,e_{k+n}\}$ for some $k\in\mathbb{N}$.
A \textbf{base line shifts} is the transformation of replacing $e_k<\dots<e_{k+n}$ in the base line with $\init^{-1}(v)$ (resp. $\term^{-1}(v)$) according to their order $<_v^\init$ (resp. $<_v^\term$).
This results in $\overline{\Delta'}$ with $\Delta'=T^{-1}\Delta T$, where $T$ is a transistor diagram whose sole transistor is labeled by $\labeling(v)$ (resp. $\labeling(v)^{-1}$).
For example, \cref{fig:closed:symmetric:diagrams} shows a base line shift.

\subsection{Rewriting with dipole reductions and similarities}

Together, base line shifts, loop rewritings and base permutations are referred to as \textbf{similarities}.
Note that, contrary to dipole reductions, similarities are reversible, i.e., each similarity $\overline{A}\rightarrow\overline{B}$ has an inverse similarity $\overline{B}\rightarrow\overline{A}$.

We define a rewriting system on the set of closed symmetric diagrams over $\SD(\Presentation)$ whose moves are dipole reductions and similarities.
Two closed symmetric diagrams are \textbf{equivalent} (as closed symmetric diagrams) if they lie in the same connected component of this rewriting system.

\begin{remark}\label{rmk:equivalent:to:similar}
    If $\Delta_1$ and $\Delta_2$ are equivalent as symmetric diagrams, then $\overline{\Delta_1}$ and $\overline{\Delta_2}$ are equivalent as closed symmetric diagrams.
\end{remark}

\begin{remark}\label{rmk:closed:equivalence:is:conjugacy}
    As noted while defining the transformations in \cref{sub:moves:closed:symmetric:diagrams}, similarities correspond to conjugacy by generators of $\SD(\Presentation)$.
    Therefore, if $\overline{A}$ and $\overline{B}$ are equivalent then $A$ and $B$ are conjugate in $\SD(\Presentation)$.
\end{remark}

\subsection{Strongly reduced diagrams and their decomposition}
\label{sub:strongly:reduced:and:parts}

We say that a closed symmetric diagram $\overline{\Delta}$ is \textbf{strongly reduced} if all representatives of its equivalence class have no dipoles.
Note that this is equivalent to having no dipole up to base line shifts and permutations, as loop rewritings have nothing to do with dipoles.
Whether $\overline{\Delta}$ is strongly reduced can thus be easily checked by forgetting about the base line and looking for dipoles.

A strongly reduced closed symmetric diagram has two parts:
its \textbf{loop part} is the subgraph consisting of all loops;
its \textbf{transistor part} is the subgraph consisting of everything else (or, equivalently, the union of those connected components which feature at least one transistor).

The \textbf{winding number} of a loop $l$ is the number of time $l$ appears in the base line (for example, the loops in \cref{fig:confluence} have winding number $2$).
Note that $l\geq1$ and that loop rewritings leave such numbers unaltered.


\section{The conjugacy problem}
\label{sec:conjugacy:problem}

\subsection{Closed symmetric diagrams and conjugacy}

The ties between conjugacy and equivalence of closed symmetric diagrams lie in the following fact.

\begin{proposition}
\label{prop:conjugacy:rewriting:system}
    Two symmetric diagrams $A$ and $B$ are conjugate in $\SD(\Presentation)$ if and only if $\overline{A}$ and $\overline{B}$ are equivalent closed symmetric diagrams.
\end{proposition}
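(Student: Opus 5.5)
The ``if'' direction is exactly \cref{rmk:closed:equivalence:is:conjugacy}. Each similarity $\overline{A}\to\overline{B}$ realizes $B$ as $G^{-1}AG$ for a generator $G$ of the groupoid. Each dipole reduction preserves the equivalence class of the underlying symmetric diagram, and hence the element of $\SD(\Presentation)$. Following a path in the rewriting system therefore produces a chain of conjugations, so I would only record this briefly.

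For the ``only if'' direction, suppose $B=G^{-1}AG$ in $\SD(\Presentation)$ for some $G\in\SD(\Presentation)$ with $\domain(G)=\domain(A)$. By \cref{rmk:groupoid:generators}, write $G=G_1G_2\cdots G_s$ with each $G_i$ a permutation diagram or a simple transistor diagram. Set $A_0=A$ and $A_i=G_i^{-1}A_{i-1}G_i$, taken as actual diagrams obtained by concatenation, not reduced. Then $A_s$ is equivalent to $B$ as a symmetric diagram. By \cref{rmk:equivalent:to:similar}, $\overline{A_s}$ and $\overline{B}$ are equivalent closed diagrams. It therefore suffices to show that $\overline{A_{i-1}}$ and $\overline{G_i^{-1}A_{i-1}G_i}$ are equivalent for each single generator. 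Since equivalence only depends on the class of $A_{i-1}$ (again by \cref{rmk:equivalent:to:similar}), I may freely choose a convenient representative of $A_{i-1}$.

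The single-generator step has two cases. If $G_i$ is a permutation diagram $P$, closing $P^{-1}A_{i-1}P$ yields the same graph as $\overline{A_{i-1}}$ with the base line permuted: the wires of $P^{-1}$ and $P$ merge into those of $A_{i-1}$. This is a base permutation. If $G_i$ is a simple transistor diagram $T$ with transistor $v$ labeled $(x_1\dots x_n,y_1\dots y_m)$, then in $\overline{T^{-1}A_{i-1}T}$ the transistor of $T$ and the transistor of $T^{-1}$ become adjacent through the base line. Let $v'$ be the copy coming from $T^{-1}$; then $\term^{-1}(v')$ or $\init^{-1}(v')$ is a consecutive block of the base line. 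A base line shift across $v'$ moves the base line to the other side of $v'$. After that shift, the copies of $v$ from $T$ and $T^{-1}$ either form a dipole whose reduction gives back $\overline{A_{i-1}}$ up to a base permutation, or they have been pushed so that the result coincides with $\overline{A_{i-1}}$ with an extra transistor pair that cancels as a dipole. Up to a base permutation, we are in the situation where the base line shift itself is the inverse move described in \cref{sub:moves:closed:symmetric:diagrams}.

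The main obstacle is the case where the wires of $\overline{A_{i-1}}$ feeding the block $x_1\dots x_n$ are loops, so no transistor is available to shift across. In that case the new closed diagram is obtained by a loop rewriting, which is precisely why that move was introduced. I would handle this by first using base permutations to group the relevant base-line occurrences of those loops into consecutive periodic blocks, as required in the definition with parameters $k,p,n$. Then I would check that closing $T^{-1}\Delta T$ replaces each of the $p$ passes of the loops through the block by the $y$-colored loops. If the wires are a mix of loops and non-loop wires, I would first apply a dipole insertion and reduction argument: pick a representative of $A_{i-1}$ in which a dipole $T T^{-1}$ is inserted along the relevant wires. Equivalence of symmetric diagrams permits this, and the inverse of a dipole reduction is allowed because we work with connected components. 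This reduces to the transistor case, where a single base line shift suffices.
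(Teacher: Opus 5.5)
Your main line is correct and is essentially the paper's argument, made explicit. The paper stacks $X^{-1}$, $B$, $X$ and pushes the base line through the whole copy of $X^{-1}$ by base line shifts and base permutations before cancelling $X^{-1}$ against $X$; you do the same thing one generator at a time. Your third paragraph can be stated more cleanly. In $\overline{T^{-1}A_{i-1}T}$ the output wires of $v$ are exactly the input wires of $v'$, in the same order and with inverse labels, so $D(v,v')$ is a dipole obstructed only by the base line. One base line shift across $v'$ removes the obstruction, and reducing $D(v,v')$ returns exactly $\overline{A_{i-1}}$. There is no second alternative, and no base permutation is needed.

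Your last paragraph, however, rests on a misconception and should be deleted.

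\textbf{There is no loop-case obstacle.} The transistor you shift across is $v'$, which comes from the conjugator, not from $A_{i-1}$. It is present whatever the wires of $A_{i-1}$ at the block look like, so your third paragraph already covers every case. Loop rewritings play no role in this proposition; the paper needs them only for confluence in \cref{prop:dipole:reductions}.

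\textbf{The proposed fix is false.} A loop rewriting never changes the set of transistors, while $\overline{T^{-1}A_{i-1}T}$ has two more transistors than $\overline{A_{i-1}}$. Even after reducing, the periodic block structure required by the definition need not exist. For instance, take the presentation of \cref{ex:symmetric:diagram:group} with baseword $xyx$. Let $T$ carry the relation $C$ on the first two letters, and let $A_{i-1}$ be the permutation diagram swapping the two $x$'s. Then:
\begin{itemize}
\item all wires of $\overline{A_{i-1}}$ are loops;
\item the loops through the block have winding numbers $2$ and $1$;
\item $T^{-1}A_{i-1}T$ is already reduced and has two transistors.
\end{itemize}
So its closure is not obtained from $\overline{A_{i-1}}$ by any loop rewriting. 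It is related to $\overline{A_{i-1}}$ precisely by the base line shift across $v'$ followed by one dipole reduction. Your dipole-insertion step for the ``mixed'' case is likewise unnecessary.
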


\begin{proof}
    One direction is \cref{rmk:closed:equivalence:is:conjugacy}.
    For the other, assume that $A=X^{-1}BX$ in $\SD(\Presentation)$.
    By \cref{rmk:equivalent:to:similar}, the choice of representatives for $A$ and $B$ does not matter, so let $A$ be obtained by stacking the diagrams $X^{-1}$, $B$ and $X$ without performing dipole reductions.
    Close such diagram and perform base line shifts and base permutations until the copy of $X^{-1}$ (or $X$) has passed through the base line.
    Then $X^{-1}$ and $X$ cancel out, producing the diagram $B$.
\end{proof}

\subsection{Confluence up to similarities}

Recall the notion of \textit{strongly reduced} diagrams from \cref{sub:strongly:reduced:and:parts}.

\begin{proposition}
\label{prop:dipole:reductions}
    If $\overline{A}$ and $\overline{B}$ are equivalent and strongly reduced, then they are similar.
\end{proposition}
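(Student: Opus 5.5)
We plan to treat the rewriting system of closed symmetric diagrams \emph{modulo similarities}. Consider the quotient in which two closed diagrams are identified when they are connected by similarities alone; call these classes \emph{similarity classes}. Since similarities are reversible, equivalence of closed diagrams is the same as being connected by a zigzag of dipole reductions and inverse dipole reductions between similarity classes. The statement then amounts to confluence of the induced rewriting system on similarity classes. In that system, the strongly reduced classes are exactly the terminal ones: by definition, no representative of such a class has a dipole.

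The system on classes is terminating, since a dipole reduction decreases the number of transistors by two and similarities preserve it. By \cref{lem:Newman:diamond} it therefore suffices to prove local confluence up to similarity. Concretely, suppose $\overline{A}\sim\overline{A'}$ are similar and $\overline{A}\to\overline{B_1}$, $\overline{A'}\to\overline{B_2}$ are dipole reductions. We must produce closed diagrams $\overline{C_1}$, $\overline{C_2}$ with $\overline{C_1}$ similar to $\overline{C_2}$, such that $\overline{B_i}$ reaches $\overline{C_i}$ by dipole reductions and similarities.

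We split this into two lemmas.

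\textbf{Lemma (commutation).} If $\overline{A}\to\overline{A'}$ is a single similarity and $D$ is a dipole in $\overline{A}$, then $D$ is also a dipole of $\overline{A'}$, and the two reductions give similar results.
\begin{itemize}
\item For base permutations and base line shifts, the underlying graph (transistors, wires, orders at transistors) is unchanged, and only the base line changes. Reducing $D$ in both then yields the same graph with two base lines, related by the corresponding permutation or shift. One subtlety arises when the shift uses a transistor of $D$: reducing the shifted diagram produces a base line that is a permutation of the other.
\item For loop rewritings, the dipole lies in the transistor part and is untouched, so the two operations act on disjoint subgraphs and commute.
\end{itemize}

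\textbf{Lemma (classical diamond).} Two distinct dipoles $D(u,v)$ and $D(u',v')$ in the same closed diagram are either disjoint or overlap. Disjoint reductions commute. An overlap, say $v=u'$, forces $\labeling(u)=\labeling(v)^{-1}=\labeling(v')$. Reducing either dipole then yields the same graph up to relabeling, with base lines that differ only by which copy of the wires survives, hence related by a base permutation or shift. This is the analogue of \cite[Theorem 3.17]{DiagramGroups}, adapted to the base line.

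A further degenerate case needs separate care. A dipole reduction in a closed diagram may create new loops, for instance when $\term^{-1}(u)$ and $\init^{-1}(v)$ form a cycle through the base line. We must check that the resulting loops and their base line occurrences agree up to base permutation in both branches.

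With the two lemmas, an induction along a similarity path shows that any dipole reduction available in one representative of a similarity class can be mirrored in any other representative, with similar outcomes. Combined with the diamond lemma, this gives local confluence of the class system. Newman's lemma then yields a unique terminal class in each component, so two equivalent strongly reduced diagrams lie in the same similarity class, i.e., they are similar.

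The main obstacle is the commutation lemma for base line shifts through a transistor of the dipole being reduced, and the emergence of loops. Here one must carefully verify that the base lines, including multiplicities of loop wires, match up to base permutation. We would first handle this case by explicitly tracking where the wires of $D$ sit in the base line before and after the shift.
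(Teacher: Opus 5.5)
Your framework matches the paper's: a rewriting system on similarity classes, termination by counting transistors, and Newman's lemma. The gap is in local confluence, which misses exactly the case that forces loop rewritings to be similarities at all. Your overlap case $v=u'$ must also allow $v'=u$, i.e.\ both $D(u,v)$ and $D(v,u)$ are dipoles. Then $\init^{-1}(u)=\term^{-1}(v)$ and $\init^{-1}(v)=\term^{-1}(u)$, so $u$, $v$ and their wires form a whole connected component. This component contains a directed cycle, which must cross the base line, so the two reductions can only be performed in different (similar) representatives. Write $\labeling(u)=(x_1\cdots x_n,y_1\cdots y_m)$. Reducing $D(u,v)$ merges each wire of $\term^{-1}(u)$ with itself and leaves $n$ loops colored $x_1,\dots,x_n$. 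Reducing $D(v,u)$ leaves $m$ loops colored $y_1,\dots,y_m$. These are not ``the same graph up to relabeling''. Base permutations and base line shifts never change the underlying graph, so no sequence of them relates the two outcomes; only a loop rewriting does (\cref{fig:confluence}). Your remark on newly created loops expects agreement up to base permutation, and that is precisely what fails here.

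Here is a concrete instance. Take $A=(\x,\x\y)$ and let $T$ be the simple transistor diagram labeled $A$. The closed diagram of $TT^{-1}$ reduces to a single $\x$-loop. After a base line shift through the $A$-transistor, it instead reduces to an $\x$-loop and a $\y$-loop. Both results are strongly reduced and equivalent, yet they are not related by base permutations or shifts. Your argument uses loop rewritings only to observe that they commute with reductions. So it would equally ``prove'' the proposition with loop rewritings removed from the similarities, which this example refutes.

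The repair is to treat this reversed double dipole as a separate case and close the diamond with a loop rewriting. To meet that move's hypotheses, note that the number of base-line crossings along each cycle through $u$ and $v$ is invariant under similarities. Hence all resulting loops share one winding number $p$ in both branches, and a base permutation arranges the base line as required.

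Separately, your commutation lemma is false as stated. A base line shift through $u$ or $v$ can put the base line on the wires joining them, so $D$ need not stay reducible. What you need, and what the paper uses, is that reducing the same dipole in two similar representatives where it is reducible gives similar results.
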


\begin{proof}
    Let us write $\llbracket\overline{A}\rrbracket$ to denote the similarity class of $\overline{A}$.
    Consider the rewriting system on the set of all similarity classes whose moves are dipole reductions on some representative of the similarity class.
    Note that the only possible obstacle to being able to reduce a dipole $D(u,v)$ is the presence of the base line between the two transistors $u$ and $v$, so if the same dipole can be reduced in $\overline{A_1},\overline{A_2}\in\llbracket\overline{A}\rrbracket$ then the resulting diagrams are in the same similarity class.

    Let us show that the rewriting system is confluent using \cref{lem:Newman:diamond}.
    That it is terminating is clear: each move strictly reduces the number of transistors, which is invariant in each similarity class.
    As for local confluence, let us assume that $\llbracket\overline{A}\rrbracket\rightarrow\llbracket\overline{B_1}\rrbracket$ and $\llbracket\overline{A}\rrbracket\rightarrow\llbracket\overline{B_2}\rrbracket$ are two distinct dipole reductions, say that they reduce dipoles $D(u_1,v_1)$ and $D(u_2,v_2)$, respectively.
    If $\{u_1,v_1\}\cap\{u_2,v_2\}=\emptyset$ then, up to possibly moving the base line out of the way, they can be performed in any order and so there is nothing to prove.
    If $u_1=u_2$ and $v_1=v_2$ then the two reductions are the same.
    If $u_1=v_2$ and $v_1\neq u_2$ or if $v_1=u_2$ and $u_1\neq v_2$, then the result of the two reductions is the same, exactly as happens in non-closed symmetric diagrams.
    If $u_1=v_2$ then we are in the situation described in \cref{fig:confluence}:
    a loop rewriting produces the same result and we are done.
\end{proof}

\begin{figure}
    \centering
    \includegraphics[width=\textwidth]{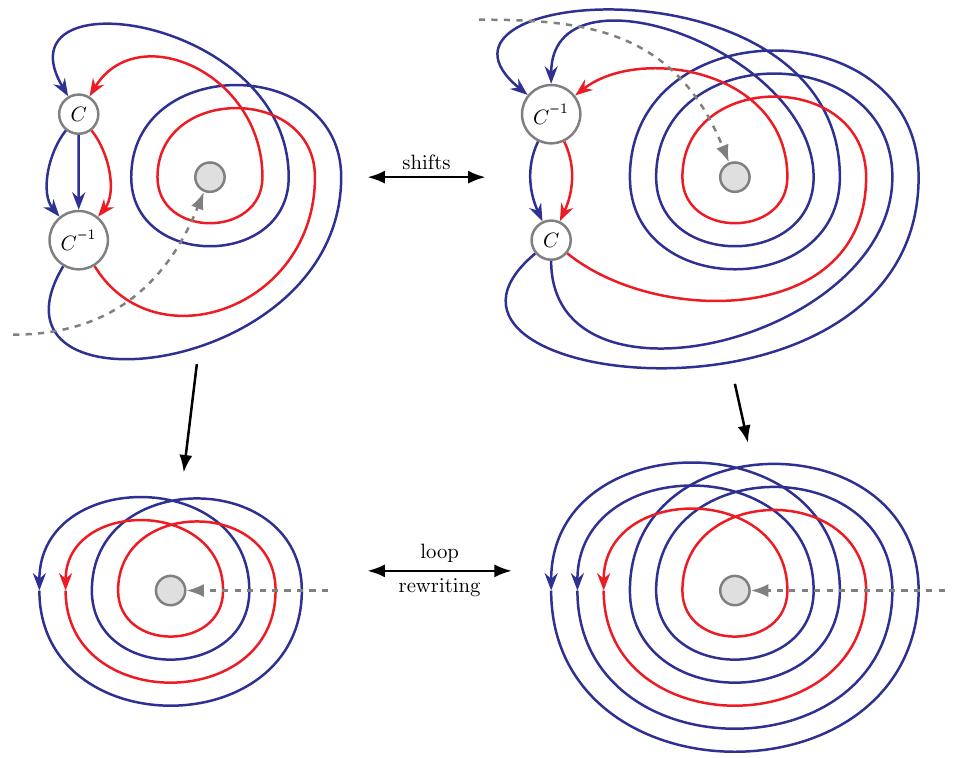}
    \caption{Local confluence up to similarity.}
    \label{fig:confluence}
\end{figure}

\subsection{Checking similarity}

For transistor parts, similarities are base line shifts and base permutations (there are no loop rewritings).
This is dealt with 
as done for Thompson’s group $V$ in
previous literature \cite{BM14,Nalecz}, so we omit all the details.

\begin{remark}\label{rmk:cohomology}
    Up to base permutations (which can be checked by brute force in finite time), the base line encodes graph cohomology, as explained in Rachel Nalecz's thesis \cite{Nalecz} or in \cite[Section 2]{BM14} about Thompson's group $V$.
    
    The base line corresponds to a coboundary in the graph underlying a closed symmetric diagram.
    Such coboundary is obtained by assigning to each edge $e$ the (non-negative) number of times $e$ appears in the base line.
    Two coboundaries are cohomologous precisely when the respective base lines differ by base line shifts and base permutations.
    In brief, that is because each base line shift prompts a sum or subtraction of the coboundary of a vertex and, conversely, two base lines with cohomologous coboundaries cut the diagrams in components that are valid symmetric diagrams (because they correspond to the same ``puncture'' in the sense of \cite{Nalecz}), and shifting through the components between the two base lines will move one base line to the other (see for example \cref{fig:cohomology}).
    The diagram to shift through is determined in more detail in the proof of Proposition 3.3 of \cite{BM14}, where it is denoted by $\mathfrak{h}_0$ (and an isomorphic copy $\mathfrak{h}_1$, one from the top and the other from the top of the infinite ``universal cover'' diagram).
    
    The thesis \cite{Nalecz} shows how to decide whether two coboundaries are cohomologous for Thompson's group $V$.
    Straightforward modifications produce an algorithm that determines whether two base lines differ from similarities.
\end{remark}

\begin{figure}
    \centering
    \includegraphics[scale=.75]{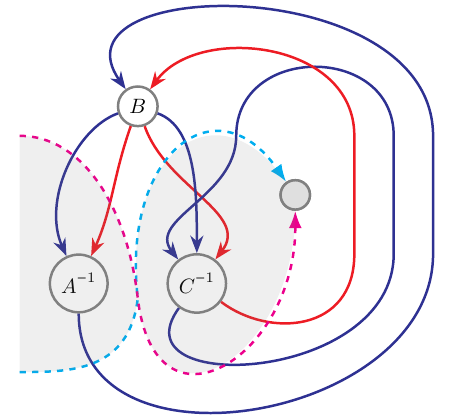}%
    \caption{Cohomologous coboundaries and areas to be shifted through.}
    \label{fig:cohomology}
\end{figure}

For loop parts, we need to decide when they are the same up to loop rewritings and base permutations.
For each $N\in\mathbb{N}\cup\{\infty\}$, let $\semigp_N$ denote the abelian semigroup generated by $x^{(i)}$ for all $x\in\Sigma$ and $1\leq i\leq N$, subject to relations
\[x_1^{(i)}x_2^{(i)}\dots x_n^{(i)}=y_1^{(i)}y_2^{(i)}\dots y_m^{(i)}\]
for all $1\leq i\leq N$ and all relation $(x_1x_2\dots x_n,y_1y_2\dots y_N)\in\Presentation$.
Note that $\semigp_N$ is a finitely presented abelian semigroup as soon as $\Presentation$ is a finite semigroup presentation and $N\neq\infty$.
Furthermore, $\semigp_1<\semigp_2<\dots<\semigp_\infty$.

Let $\LoopMap$ be the map that associates to each loop colored by $x\in\Sigma$ and with winding number $k\geq1$ the element $x^{(i)}$ of $\semigp_k$.
This extends to a map that sends the loop part of any strongly reduced closed symmetric diagram with loops $l_1,\dots,l_n$ to the sum $\LoopMap(l_1)+\dots+\LoopMap(l_n)\in\semigp_N$, for $N$ any number greater than or equal to the winding number of each $l_i$.

\begin{proposition}\label{prop:loop:semigroup}
    Loop parts $A$ and $B$ are similar $\iff\LoopMap(A)=_{\semigp_\infty}\LoopMap(B)$.
\end{proposition}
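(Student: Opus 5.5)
We prove the two implications separately, first setting up the right bookkeeping. We read $\LoopMap$ as sending a loop of color $x$ and winding number $k$ to $x^{(k)}$. Thus the superscript records the winding number, and $\LoopMap(A)$ is a commutative word in these generators. The first observation is that a loop part is determined, up to base permutations, by the finite multiset of pairs (color, winding number) of its loops. Indeed, a loop part is just a disjoint union of loops with no transistors. A base permutation can rearrange the base line arbitrarily, so it can realize any ordering of the occurrences of the loops. Hence two loop parts with the same multiset of pairs are similar using base permutations alone. This shows that base permutations are exactly accounted for by the commutativity of $\semigp_\infty$ and by identifying loops of equal color and winding number.

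For ($\Rightarrow$) it suffices to check that each similarity preserves $\LoopMap$ in $\semigp_\infty$. Base line shifts do not occur, since there are no transistors, and base permutations preserve the multiset of pairs. For a loop rewriting, fix the block $e_{k+1},\dots,e_{k+pn}$. This block consists of $n$ loops, each of winding number $p$, whose colors read $x_1\dots x_n$. They are replaced by $m$ loops $f_1,\dots,f_m$ of winding number $p$, colored $y_1\dots y_m$, where $(x_1\dots x_n,y_1\dots y_m)\in\Relations^{\pm1}$. So $\LoopMap$ changes from $\cdots+x_1^{(p)}+\dots+x_n^{(p)}$ to $\cdots+y_1^{(p)}+\dots+y_m^{(p)}$, which is precisely a defining relation of $\semigp_p\subseteq\semigp_\infty$. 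One point needs care here: the hypothesis ``these are all the instances of these edges'' ensures that the winding number of each replaced loop is exactly $p$.

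For ($\Leftarrow$), suppose $\LoopMap(A)=\LoopMap(B)$ in $\semigp_\infty$. Then there is a finite chain of elementary steps in the free abelian semigroup on the $x^{(i)}$. Each step replaces a sub-sum $x_1^{(i)}+\dots+x_n^{(i)}$ by $y_1^{(i)}+\dots+y_m^{(i)}$ for some relation in $\Relations^{\pm1}$. We realize each step by similarities on loop parts, by induction on the length of the chain. Given a loop part whose image contains the sub-sum $x_1^{(i)}+\dots+x_n^{(i)}$, choose loops $l_1,\dots,l_n$ with colors $x_1,\dots,x_n$ and winding number $i$. Then apply a base permutation so that the base line contains the consecutive block $(l_1,\dots,l_n)$ repeated $i$ times and these are all occurrences of the $l_j$. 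A loop rewriting with $p=i$ then produces loops colored $y_1,\dots,y_m$ of winding number $i$, whose image is the new sum. At the end of the chain we reach a loop part with the same image as $B$. By the first observation it is similar to $B$ via base permutations.

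The main obstacle is the base permutation step in ($\Leftarrow$): arranging the occurrences into $i$ consecutive identical blocks in the pattern required by the loop rewriting definition. I would handle this by noting explicitly that a base permutation may realize any reordering of the base-line multiset, so the pattern can always be achieved.
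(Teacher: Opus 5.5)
Your proposal is correct and follows the same route as the paper. Both arguments show that base permutations preserve $\LoopMap$ by commutativity, and that each loop rewriting is exactly one application of a defining relation $x_1^{(k)}\cdots x_n^{(k)}=y_1^{(k)}\cdots y_m^{(k)}$ of $\semigp_\infty$. You simply spell out the converse more carefully than the paper's one-line ``correspond one-to-one'' claim: you use a base permutation to arrange the chosen loops into $k$ consecutive identical blocks before rewriting, and you note that loop parts with the same multiset of (color, winding number) pairs are related by base permutations alone.
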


\begin{proof}
    Observe that similarities on the loop parts of closed symmetric diagrams are only loop rewritings and base permutations.
    Moreover, if two loop parts $A_1$ and $A_2$ are the same up to a base permutation, then $\LoopMap(A_1)=\LoopMap(A_2)$, since the semigroups $\semigp_N$ are abelian.
    Let $A$ be a loop part and assume that a loop rewriting $A\rightarrow B$ replaces the loops $a_1,\dots,a_n$ with loops $b_1\dots,b_m$.
    Let $k$ be the winding number of such loops and let $x_i=\coloring(a_i)$ (for $i=1,\dots,n$) and $y_j=\coloring(b_j)$ (for $j=1,\dots,m$).
    Then the abelian semigroup $\semigp_\infty$ is subject to the relation $x_1^{(k)}x_2^{(k)}\dots x_n^{(k)}=y_1^{(k)}y_2^{(k)}\dots y_m^{(k)}$.
    Therefore, loop rewritings (up to base permutations) corredpond one-to-one to applications of relations of $\semigp_\infty$, so the statement is proved.
\end{proof}

Each element of $\semigp_\infty$ belongs to $\semigp_N$ for an easily computable $N\in\mathbb{N}$.
Hence, two elements $a$, $b$ of $\semigp_\infty$ are the same in $\semigp_\infty$ if and only if they are the same in $\semigp_N$ for all $N$ such that $a,b\in\semigp_N$.
Finitely presented abelian semigroups have uniformly solvable word problem (see for example \cite[Theorem~5.1]{AbSemigroup}), so there is an algorithm that decides whether any two elements of $\semigp_\infty$ are the same.
Thanks to \cref{prop:loop:semigroup}, this proves the following fact.

\begin{corollary}\label{cor:loop:word:problem}
    The similarity of loop parts is decidable.
\end{corollary}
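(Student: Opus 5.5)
By \cref{prop:loop:semigroup}, deciding whether two loop parts $A$ and $B$ are similar amounts to deciding whether $\LoopMap(A)=\LoopMap(B)$ holds in $\semigp_\infty$. So the plan is to turn this equality test into a word problem in a single finitely presented abelian semigroup $\semigp_N$, and then invoke the uniform solvability of the word problem for such semigroups \cite[Theorem~5.1]{AbSemigroup}.

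The first step is to compute $N$. Given $A$ and $B$, read off the winding number of every loop in $A$ and in $B$, and let $N$ be their maximum; this is finite because $A$ and $B$ are finite graphs with finite base lines. Then both $\LoopMap(A)$ and $\LoopMap(B)$ are explicit finite sums of generators $x^{(i)}$ with $i\le N$, so they lie in $\semigp_N$.

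The key step is to show that equality in $\semigp_\infty$ is already equality in $\semigp_N$. The defining relations of $\semigp_\infty$ are indexed by $i$, and each relation involves only generators with that single superscript. Hence $\semigp_\infty$ is the free product, in the category of commutative semigroups, of copies $\semigp^{(i)}$ of the abelian semigroup presented by $\Presentation$. Concretely, consider the projection that deletes every generator with superscript $i>N$. It is compatible with all relations: a relation with superscript $i>N$ has both sides entirely deleted, and one checks that this does not break the semigroup structure, for instance by working in the monoid obtained by adjoining an identity. Any chain of relations in $\semigp_\infty$ between two elements of $\semigp_N$ therefore projects to a chain inside $\semigp_N$. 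A simpler route is to observe directly that a rewriting step using a relation with superscript $i>N$ can never touch a word that contains no generators with superscript $i$, since both sides of such a relation are non-empty. By induction along the chain, no word in it ever contains such generators. Either way, $\semigp_N\to\semigp_\infty$ is injective, which is the inclusion $\semigp_N<\semigp_\infty$ stated in the paper.

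The last step is algorithmic. $\semigp_N$ is given by an explicit finite presentation with $N|\Sigma|$ generators and $N|\Relations|$ relations, so the algorithm of \cite{AbSemigroup} decides whether $\LoopMap(A)=\LoopMap(B)$ in $\semigp_N$, and hence in $\semigp_\infty$. By \cref{prop:loop:semigroup} this decides similarity. The only real obstacle is the injectivity in the key step; the non-emptiness argument settles it, and everything else is bookkeeping.
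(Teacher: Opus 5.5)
Your proof is correct and follows the paper's argument. You use \cref{prop:loop:semigroup} to reduce to an equality in $\semigp_\infty$, test it in the finitely presented $\semigp_N$ with $N$ the maximal winding number, and apply the uniform solvability of the word problem for finitely presented abelian semigroups. The only addition is that you justify the injectivity of $\semigp_N\to\semigp_\infty$, via non-emptiness of both sides of every relation; the paper simply asserts this as $\semigp_N<\semigp_\infty$, and your justification is sound.
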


\subsection{Solving the conjugacy (search) problem}

Let $\Presentation$ be a fixed finite semigroup presentation and $w$ be a fixed word in the alphabet $\Sigma$.
The following algorithm checks whether two symmetric diagrams in $\SD(\Presentation)$ produce equivalent closed symmetric diagrams.

\begin{algorithm}
    Let $\Delta_1$, $\Delta_2$ be symmetric diagrams over $\Presentation$ with baseword $w$.
    \begin{enumerate}
        \item Compute $\overline{\Delta_1}$ and $\overline{\Delta_2}$ and reduce them up to similarities, performing base line shifts and base permutations whenever needed to unlock a dipole reduction.
        The result is unique up to similarity by \cref{prop:dipole:reductions}.
        \item Check whether the loop parts are similar by \cref{cor:loop:word:problem}.
        \item Check whether the transistor parts are similar by \cref{rmk:cohomology}.
    \end{enumerate}
\end{algorithm}

By \cref{prop:conjugacy:rewriting:system}, this is the same as checking conjugacy in the groupoid $\SD(\Presentation)$.
If two elements $g,h$ of a symmetric diagram group $\SD_w(\Presentation)$ are conjugate in the groupoid $\SD(\Presentation)$, then any conjugator must have domain and range equal to $w$ and must thus belong to $\SD_w(\Presentation)$, so being conjugate in $\SD(\Presentation)$ and $\SD_w(\Presentation)$ is the same for elements of $\SD_w(\Presentation)$.
Therefore, the previous algorithm decides conjugacy in $\SD_w(\Presentation)$, as desired.

The conjugacy search problem is solved by the fact that each similarity performed during a run of the algorithm builds pieces of a conjugator as a product of generators of $\SD(\Presentation)$, as noted throughout \cref{sub:moves:closed:symmetric:diagrams}.


\section{The period problem and computing periods}
\label{sec:period}

In this section we adapt and generalize the algorithm given in \cite[Section 5]{BBIP26} for Thompson's group $V$, arguably simplifying it along the way.

Closed diagrams encode information about conjugacy in the groupoid $\SD(\Presentation)$, which is enough for the purposes of determining periodicity and period:

\begin{remark}\label{rmk:period:conjugate}
    Let $A\in\SD_w(\Presentation)$ and $B\in\SD_v(\Presentation)$ be conjugate in $\SD(\Presentation)$.
    Then $A$ is periodic if and only if $B$ is, in which case they have the same period.
\end{remark}

When finite, periods can be easily computed thanks to the following fact.

\begin{proposition}\label{prop:period}
    If that $\Delta$ has no transistors, then $\Delta$ is periodic and its period equals the least common multiple of the winding numbers of the loops of $\overline{\Delta}$.
\end{proposition}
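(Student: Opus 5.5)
If $\Delta$ has no transistors, then $\Delta$ is a permutation diagram with domain and range $w=x_1\dots x_r$. I would identify it with a color-preserving permutation $\sigma$ of $\{1,\dots,r\}$: the wire starting at the $i$-th source ends at the $\sigma(i)$-th sink, and $x_{\sigma(i)}=x_i$. The plan is to show that $\overline{\Delta}$ records exactly the cycle decomposition of $\sigma$, and then to compute the period of $\sigma$ directly.

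First I would describe $\overline{\Delta}$. Closing the diagram identifies the $j$-th sink with the $j$-th source. So the wire leaving position $i$ continues, after passing the base line, as the wire leaving position $\sigma(i)$, and so on. Each connected component of $\overline{\Delta}$ is therefore a vertexless edge, that is, a loop. There is one loop for each cycle $(i,\sigma(i),\dots,\sigma^{k-1}(i))$ of $\sigma$. That loop crosses the base line exactly at the positions $i,\sigma(i),\dots,\sigma^{k-1}(i)$, so its winding number is the length $k$ of the cycle. This step is pure bookkeeping, and it is the place where care is needed: one must check that the merging rule in the definition of $\overline{\Delta}$, with the base line taken as the multiset of merged wires, really produces one loop per cycle with multiplicity equal to the cycle length.

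Second I would compute the period. Composition of permutation diagrams involves no transistors, hence no dipoles, so $\Delta^n$ is again a permutation diagram and corresponds to $\sigma^n$. A reduced diagram with no transistors equals the identity exactly when its permutation is trivial, by uniqueness of reduced representatives. Hence $\Delta^n=1$ if and only if $\sigma^n=\mathrm{id}$. The order of a permutation is the least common multiple of its cycle lengths, so it is finite and, by the first step, equals the lcm of the winding numbers of the loops of $\overline{\Delta}$.

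Alternatively, the result can be seen up to conjugacy. By \cref{rmk:period:conjugate}, one may apply base permutations so that the occurrences of each loop are contiguous in the base line. This conjugates $\Delta$ to a disjoint union of cyclic shifts of lengths $k_1,\dots,k_s$, whose period is visibly $\mathrm{lcm}(k_1,\dots,k_s)$. Either route completes the proof; the only real obstacle is the identification of winding numbers with cycle lengths in the first step.
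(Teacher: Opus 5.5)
Your proposal is correct and follows essentially the paper's argument. The paper likewise observes that a loop of winding number $k$ corresponds to a wire that returns to its starting position in $\Delta^r$ exactly when $k$ divides $r$, which is your identification of loops with cycles of $\sigma$ and of winding numbers with cycle lengths; your version additionally spells out the closing bookkeeping and the fact, via uniqueness of reduced representatives, that a transistor-free diagram is trivial if and only if its permutation is.
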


\begin{proof}
    The winding number of a loop determines the minimum number of times the loop passes through the base line, where each pass corresponds to moving through a copy of $\Delta$.
    Therefore, if a loop has winding number $k$ then in the diagram for $\Delta^k$ (which is a stack of $k$ copies of $\Delta$) the wire corresponding to $l$ ends back to its initial position.
    For any $\Delta^r$ where $r$ is not a multiple of $k$, the wire is instead not back to its initial position, so the statement follows.
\end{proof}

\begin{proposition}\label{prop:transistors:not:periodic}
    Assume that $\overline{\Delta}$ is strongly reduced.
    Then $\Delta$ is periodic if and only if it has no transistors.
\end{proposition}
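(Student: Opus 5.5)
The "if" direction is immediate from \cref{prop:period}: if $\overline{\Delta}$ has no transistors, then $\Delta$ has no transistors either, since the two share the same transistor set, and so $\Delta$ is periodic.

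For the "only if" direction I will argue by contrapositive. Assume $\overline{\Delta}$ is strongly reduced and has at least one transistor; I want to show $\Delta^n \neq 1$ for every $n\geq 1$. The plan is to look at $\overline{\Delta^n}$ and show it still contains transistors that no sequence of dipole reductions and similarities can remove. If $\Delta^n$ were trivial, its closed diagram would be equivalent to a diagram consisting only of loops. The set of transistors is invariant under similarities and only decreases under dipole reductions. So by \cref{prop:dipole:reductions} it suffices to show that $\overline{\Delta^n}$ is itself strongly reduced and has transistors.

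The key construction is that $\overline{\Delta^n}$ is an $n$-fold cyclic cover of $\overline{\Delta}$. Cut $\overline{\Delta}$ along the base line, take $n$ copies, and glue copy $i$ to copy $i+1$ modulo $n$ along the base line. This is exactly closing the stack of $n$ copies of $\Delta$. The resulting covering map $\pi\colon\overline{\Delta^n}\to\overline{\Delta}$ has the following properties:
\begin{itemize}
    \item it sends transistors to transistors and wires to wires;
    \item it preserves labels, colors and the local orders $<_v^\init$, $<_v^\term$;
    \item it is a local isomorphism at each transistor.
\end{itemize}
In particular $\overline{\Delta^n}$ has $n$ times as many transistors as $\overline{\Delta}$, hence at least one.

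Now suppose $\overline{\Delta^n}$ had a dipole $D(u,v)$, possibly after base line shifts and permutations. As the paper notes, dipoles can be detected by forgetting the base line. Then $\pi(u)$ and $\pi(v)$ have mutually inverse labels. The wires $e_1,\dots,e_k$ in $\init^{-1}(u)$ map to $\init^{-1}(\pi(u))$ in order and also to $\term^{-1}(\pi(v))$ in order. So $D(\pi(u),\pi(v))$ is a dipole in $\overline{\Delta}$ once the base line is ignored.

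The one case needing care is $\pi(u)=\pi(v)$. A transistor cannot form a dipole with itself when its label is not self-inverse, which holds by the standing assumption $(x=y)\in\Relations\implies(x=y)^{-1}\notin\Relations$. Consistency of the wire orders also rules this case out. This contradicts strong reducedness of $\overline{\Delta}$. Hence $\overline{\Delta^n}$ is strongly reduced with transistors, and its reduced form is not loop-only. Therefore $\Delta^n$ is not the identity, because the identity's closed diagram has only loops.

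I expect the main obstacle to be making precise that dipoles lift and project correctly through the covering. Specifically, one must check that a dipole visible in $\overline{\Delta^n}$ after similarities projects to a dipole of $\overline{\Delta}$ visible after similarities. The argument relies on the observation, stated with the definition of strongly reduced diagrams, that the base line can be forgotten when searching for dipoles. Beyond that, it uses only that the covering is a local isomorphism on stars of transistors.
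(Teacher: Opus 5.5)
Your overall strategy is the paper's: a dipole in a power of $\Delta$ projects to a dipole of $\overline{\Delta}$. You also correctly located the crux. The step that closes it, however, does not hold. You derive a contradiction from $D(\pi(u),\pi(v))$ being a dipole of the bare graph of $\overline{\Delta}$, citing the remark that strong reducedness can be checked by forgetting the base line. That remark is not correct in the direction you use.

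Let $c(e)$ be the number of occurrences of a wire $e$ in the base line. A base line shift through a transistor $x$ changes $c$ by $\pm1$ on the wires leaving $x$ and by $\mp1$ on those entering it. Base permutations and loop rewritings leave $c$ unchanged on transistor wires. Hence for two wires $e_1,e_2$ from $u$ to $v$, the difference $c(e_1)-c(e_2)$ is a similarity invariant. If it is nonzero, no diagram similar to $\overline{\Delta}$ has $D(u,v)$ as a reducible dipole.

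This does occur in strongly reduced diagrams. Over $\langle a\mid a=aa\rangle$ with baseword $aaa$, let $\Delta$ do the following: split the first source into $(p_1,p_2)$; split $p_2$ into $(p_3,p_4)$; merge (second source, $p_4$); merge (that output, third source). Take the sinks to be $p_1,p_3$ and the last output, in this order. In $\overline{\Delta}$, the second split and the first merge are joined, in order, by two wires with $c=1$ and $c=0$. This is the only pair of transistors with inverse labels matching wire by wire. So $\overline{\Delta}$ is strongly reduced (and $\Delta$ indeed has infinite order), yet it has a dipole once the base line is ignored. By forgetting the base line of $\overline{\Delta^n}$ as well, you discard exactly the information needed.

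The missing ingredient is that the projected dipole has all its wires crossing the base line of $\overline{\Delta}$ the same number of times, say $m$. Then base permutations and $m$ base line shifts through $\pi(u)$ free the dipole, and that is what contradicts strong reducedness.

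The paper's proof uses this implicitly. It works in the stacked diagram $\Delta^k$, which has a dipole because it is equivalent to the identity. If $u$ lies in copy $i$ and $v$ in copy $j>i$, every connecting wire crosses the same $j-i$ junctions, so its image occurs $j-i$ times in the base line of $\overline{\Delta}$.

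The same fact holds in your setting, provided you keep track of the base line of $\overline{\Delta^n}$. Suppose $D(u,v)$ is a dipole whose wires avoid the base line of some $\overline{\Gamma}$ similar to $\overline{\Delta^n}$. Let $c_n$ be the multiplicity function of the closing line of the stack and $c_\Gamma$ that of the base line of $\overline{\Gamma}$. Then $c_\Gamma(e)=c_n(e)+f(\init(e))-f(\term(e))$ for some integer function $f$ on transistors. Meanwhile $c(\pi(e))=n\,c_n(e)+h(\term(e))-h(\init(e))$, where $h$ is the copy index. Since $c_\Gamma$ vanishes on the dipole wires, $c(\pi(e))$ is the same for all of them.

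With this added, the rest of your argument goes through, including the case $\pi(u)=\pi(v)$ and the appeal to \cref{prop:dipole:reductions}.
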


\begin{proof}
    If $\Delta$ has no transistors then it is periodic by \cref{prop:period}.
    Aiming for a contradiction, let us assume that $\Delta$ has transistors and is periodic.
    Then some power $\Delta^k$ must feature a dipole $D(u,v)$.
    Therefore a sequence of base line shifts and permutations in $\overline{\Delta}$ moves the base line through $u$ or $v$, away from the wires connecting $u$ to $v$.
    But then $\overline{\Delta}$ is not strongly reduced, which is a contradiction.
\end{proof}

Let us now fix a finite semigroup presentation $\Presentation$.

\begin{algorithm}
    Let $\Delta$ be a symmetric diagram over $\Presentation$.
    \begin{enumerate}
        \item Compute $\overline{\Delta}$ and reduce it, performing base line shifts and base permutations whenever needed to unlock a dipole reduction.
        Let $\overline{\Delta'}$ be the resulting diagram.
        It is unique up to similarity by \cref{prop:dipole:reductions}.
        \item Check whether $\overline{\Delta'}$ has transistors (which only depends on the similarity class).
        It does if and only if $\Delta'$ is not periodic, by \cref{prop:transistors:not:periodic}.
        \item If $\Delta'$ is periodic, its period is the least common multiple of the winding numbers of its loops, by \cref{prop:period}.
    \end{enumerate}
Recall that $\Delta'$ and $\Delta$ have the same period by \cref{rmk:period:conjugate,,prop:conjugacy:rewriting:system}.
\end{algorithm}


\section*{Acknowledgements}

The author is thankful to Francesco Matucci and Davide Perego for fruitful conversations.

The author is supported by the ``GoFR'' project ANR-22-CE40-0004 and by the research grant PID2022-138719NA-I00 (Proyectos de Generación de Conocimiento 2022) financed by the Spanish Ministry of Science and Innovation, and is a member of the Gruppo Nazionale per le Strutture Algebriche, Geometriche e le loro Applicazioni (GNSAGA) of the Istituto Nazionale di Alta Matematica (INdAM).

No AI tools were used at any stage of production of the present paper.


\printbibliography[heading=bibintoc]

\end{document}